\DeclareSymbolFont{cyrletters}{OT2}{wncyr}{m}{n}
\DeclareMathSymbol{\Sha}{\mathalpha}{cyrletters}{"58}
\theoremstyle{plain}
\newtheorem{theorem}{Theorem}[section]
\newtheorem{lemma}[theorem]{Lemma}
\newtheorem{proposition}[theorem]{Proposition}
\theoremstyle{definition}
\newtheorem{definition}[theorem]{Definition}
\theoremstyle{remark}
\newtheorem*{remark}{Remark}
\numberwithin{equation}{section}
\newcommand{\R}{\mathbb R}
\newcommand{\N}{\mathbb N}
\newcommand{\Z}{\mathbb Z}
\newcommand{\C}{\mathbb C}
\def\matr#1#2#3#4{\left(\begin{array}{cc}#1&#2\\#3&#4\end{array}\right)}
\def\cM{\mathcal M}
\def\H{\mathbb H}
\def\cM{\mathcal M}
\def\PP{\mathbb P}
\def\({\left(}
\def\){\right)}
\def\<{\left<}
\def\>{\right>}
\newcommand{\ol}[1]{\overline{{#1}}}
\newcommand{\wt}[1]{\widetilde{#1}}
\newcommand{\SL}{\text{SL}}
\newcommand{\abs}[1]{\left|#1\right|}
\def\cI{\mathcal{I}}
\def\sgn{\text{sgn}}
\def\eps{\epsilon}
\newcommand{\smatr}[4]{\(\begin{smallmatrix} #1 & #2 \\ #3 & #4\end{smallmatrix}\)}
\def\vareps{\varepsilon}
\begin{document}

\title[Linear Relations Among Poincar\'e Series]{Linear Relations Among Poincar\'e Series \\via Harmonic Weak Maass Forms}
\author{Robert C. Rhoades}
\address{Stanford University, Department of Mathematics, Bldg 380, Stanford, CA 94305}
\email{rhoades@math.stanford.edu}

\thanks{Research of the author was supported by an
NSF Mathematical Sciences Postdoctoral Fellowship.
Part of this work was done while supported by the chair in Analytic Number Theory at 
Ecole Polytechnique F\'ed\'erale de Lausanne.}

\date{\today}
\thispagestyle{empty} \vspace{.5cm}
\begin{abstract}
We discuss the problem of the vanishing of Poincar\'e series.   
This problem is known to be related to the existence of weakly holomorphic forms 
with prescribed principal part.  
The obstruction to the existence is related to the pseudomodularity of Ramanujan's mock theta functions. 
We embed the space of weakly holomorphic modular forms into the larger space of harmonic weak Maass forms.
From this perspective we discuss the linear relations between Poincar\'e series and the connection to Ramanujan's mock theta functions. 
\end{abstract}

\maketitle

\section{Introduction}
A very important class of modular forms is constructed via the method of averaging. 
Throughout let $z = x+iy$ be in the complex upper half plane.
For $m\in \N$,  $k
\in \frac{1}{2}\Z$ with $k\ge 2$ and $N\in \N$ such that $4\mid N$
when $k\in \frac{1}{2}\Z \setminus \Z$,
the classical Poincar\'{e} series at `$\infty$' for the group $\Gamma_0(N)$  are
defined by
\begin{equation}
P(m,k,N;z):= \sum_{\gamma \in \Gamma_\infty \backslash \Gamma_0(N)}
(j(\gamma, z))^{-2k} e(m\gamma z),\end{equation}
where $\Gamma_\infty = \{ \smatr{1}{t}{}{1}: t\in\Z\}$ is the stabilizer of the cusp $\infty$. 
  (see Chapter 3 of \cite{iwaniec}, for instance).   Here  $e(z) := e^{2\pi i
z}$ and $j(\gamma,z)$ is defined by
\begin{equation}\label{eqn:j}j(A, z):=
\begin{cases}\sqrt{cz+d} & k \in \Z
\\ \(\frac{c}{d}\) \varepsilon_d^{-1} \sqrt{cz+d} & k \in \frac{1}{2}\Z
\setminus \Z
\end{cases}\end{equation}
with $\(\frac{\cdot}{\cdot}\)$ is the Legendre symbol and $\vareps_d$,
for odd integers $d$, is given by \begin{equation}\label{eqn:epsilon}
\vareps_d:= \begin{cases}1 & d \equiv 1\pmod{4} \\ i & d\equiv
3\pmod{4}
\end{cases}.
\end{equation}

The set $\{P(m,k,N;z)\}_{m\ge 1}$ spans the
finite dimensional space of cusp forms on $\Gamma_0(N)$ of weight $k$, denoted $S_k(N)$. 
Moreover, for each $m$, 
the Petersson inner product, denoted  $\<\cdot, \cdot\>$, 
of $P(m,k,N;z)$ with any  $g\in S_k(N)$ is a constant times
the 
$m$th Fourier coefficient of $g$.  That is, 
\begin{equation}\label{eqn:petersson}
\<P(m,k,N;\cdot), g(\cdot)\> = \frac{\Gamma(k-1)}{(4\pi m)^{k-1}} \ol{c_g(m)},
\end{equation}
when $g(z) = \sum_{n\ge 1} c_g(n) q^n$ and $q:= e^{2\pi i z}$. 
Beyond this, little is known about such
Poincar\'e series. For example, since the space $S_k(N)$ is finite
dimensional, there exist many relations among the Poincar\'e series.

Establishing the vanishing or 
non-vanishing of a Poincar\'e series is a challenging problem with a long history going back at least as far
as 
Poincar\'e's memoir on Fuchsian groups
\cite{poincare} (see p. 249).  See also the English translation by Stillwell \cite{poincareBook} (see pp.
199 -- 207). 
Hejhal \cite{hejhal} discusses the problem of the vanishing of Poincar\'e series 
in relation to the holomorphic projection map from the upper half plane to a compact Riemann surface. 
Also, Iwaniec \cite{iwaniec} (see p. 54) asks for all the linear relations between
these series as well as for which Poincar\'e series vanish identically.

It turns out this problem
is related to the question of whether there are weakly holomorphic
modular forms with a given principal part.  
A \emph{weakly
holomorphic modular form} is any meromorphic modular form whose
poles are supported at the cusps.
The \emph{extended principal part} at infinity of a weakly
holomorphic modular form $f$ is the polynomial $P_{f, \infty} \in \C[q^{-1}]$
such that $f(z) -P_{f, \infty}(q^{-1}) = O\(e^{-\epsilon y} \)$ as $y\to \infty$.  The extended principal
part at other cusps is defined similarly.  Namely, if $\sigma$ is a cusp 
the extended principal part at $\sigma$ is the finite sum of  
terms in the Fourier expansion around $\sigma$ that do not have rapid decay toward $\sigma$.
Additionally, let the \emph{principal part at infinity} of $f$ 
be the extended principal part of $f$ minus the constant term.

For notational convenience, let $M_k^\infty(N)$ denote the space of weight $k$ 
weakly holomorphic modular forms on $\Gamma_0(N)$ with constant extended principal part at all 
cusps not equivalent to $\infty$. 

\begin{theorem}\label{thm:relation}
Let $k\in \frac{1}{2}\Z$ with $k \ge 2$, $N\in \N$ and $\cI$ be a
finite set of positive integers.  Let $\{\alpha_m\}_{m\in \cI}$ be a set of complex numbers. 
The following are equivalent
\begin{enumerate}
\item 
$$\sum_{m\in \cI} \alpha_m P(m,k,N;z) = 0$$
\item $$\sum_{m\in \cI} \frac{\alpha_m}{m^{k-1}} \ol{c_g(m)} = 0$$
for every $g(z) = \sum_{n=1}^\infty c_g(n) q^n \in S_k(N)$ 
\item There exists an element of $M_{2-k}^\infty (N)$ 
 with principal part at $\infty$ equal to
$$\sum_{m\in \cI} \frac{\alpha_m}{m^{k-1}} q^{-m}.$$
\end{enumerate}
\end{theorem}
\begin{remark}
A weakly holomorphic modular form of
non-positive weight is uniquely determined by the collection of its extended
principal parts at all cusps.  
\end{remark}

\begin{remark} 
We state this Theorem \ref{thm:relation} in terms of
relations between the Poincar\'e series at `$\infty$' and Fourier expansions at $\infty$.  
To handle
more general combinations of Poincar\'e series or Fourier coefficients
one needs to take a linear combination of
weakly holomorphic modular forms with principal part specified.
\end{remark}

This result has arose in many different lines of study.  For instance,  
hints of this appear  in the calculation of
exact formulas for the Fourier coefficients of weakly holomorphic modular forms.  
In that context the result may have first been 
discovered by Peterson in 1955 \cite{petersson}.  
As a result, it is sometimes referred to as the ``Peterson Principal Parts Condition''.  See Lehner's book 
\cite{lehnerBook} and the discussion starting on page 36 for more on this history. 
This result fits into the framework of Eichler cohomology. 
For such results, see the works of 
Knopp and Knopp-Mawi  \cite{knopp1, KM}.

The equivalence of parts (2) and (3) appears in the work of Siegel \cite{siegel}. 
Finally, Theorem \ref{thm:relation}  may also be constructed out of
Fay's study of the 
resolvent kernel for automorphic forms \cite{fay}.  

More recently this result has played a role in the study of Borcherds products 
and the analytic theory of Serre-Duality \cite{borcherds}.
Borcherds products give an explicit construction of a meromorphic modular form 
with prescribed divisor. 
Roughly, Serre duality for modular forms says that the only obstructions to finding a weakly holomorphic 
modular form of weight $2-k$ with 
given singularities are given by modular forms of weight $k$.   Therefore, Theorem \ref{thm:relation}
 is natural in that context.  

\section{Main Result}\label{sec:main}
We will give a reformulations and extension of Theorem \ref{thm:relation}. 
We begin by embedding the space of weakly holomorphic modular forms into
the space of harmonic weak Maass forms, 
a certain non-holomorphic generalizations of classical holomorphic modular forms. 
Define the weight $k$
hyperbolic Laplacian $\Delta_k$ by
\begin{equation}\label{deflap}
\Delta_k := -y^2\left( \frac{\partial^2}{\partial x^2}+
\frac{\partial^2}{\partial y^2}\right) + iky\left(
\frac{\partial}{\partial x}+i \frac{\partial}{\partial y}\right).
\end{equation} 

\begin{definition}\label{def:harmonic1}
A \emph{harmonic weak Maass form} of weight $k$ on
$$\Gamma = \Gamma_0(N)\subseteq
\begin{cases} \SL_2(\Z) & k\in \Z \\ \Gamma_0(4) & k\in \frac{1}{2} \Z\setminus
\Z\end{cases}$$  is any smooth function $f:\H\to
\C$ satisfying:
\begin{enumerate}
\item[(i)]
$$f \left(\frac{az+b}{cz+d}\right) = \begin{cases} (cz+d)^kf(z)& k \in \Z \\
\(\frac{c}{d}\)^{2k} \vareps_d^{-2k} (cz+d)^{k} f(z)& k \in
\frac{1}{2}\Z \setminus \Z\end{cases}$$
for all $\smatr{a}{b}{c}{d}
\in \Gamma$;
\item[(ii)] $\Delta_k f =0 $;
\item[(iii)]
There is a polynomial $G_{f,\infty}(z)=\sum_{n\leq 0} c_f^+(n)q^n \in
\C[q^{-1}]$ such that $f(z)-G_{f, \infty}(z) = O(e^{-\eps y})$ as $y\to\infty$
for some $\eps>0$.   Furthermore, analogous conditions hold at 
each of the other cusps $\sigma$ with $G_{f,\sigma}$ a constant. 
\end{enumerate}
\end{definition}
As before we refer to $G_{f, \infty}$ as the \emph{extended principal part} at infinity 
of the harmonic weak Maass form $f$.
Note, that one may 
generalize the growth condition $(iii)$ in various ways and it is often useful to do so.  See 
the recent work of Duke,  Imamoglu, and T\'oth
\cite{DIT} for an instance where there condition of (iii) is relaxed to allow linear exponential growth 
even after the subtraction of the extended principal part. 

Denote the space of harmonic weak Maass forms of weight $2-k$ for $\Gamma_0(N)$ by $H_{2-k}(N)$.  
By definition 
$$M_{2-k}^\infty(N) \subseteq H_{2-k}(N).$$
Extending the space of weakly holomorphic modular forms to that of harmonic weak Maass forms allows 
us to explicitly construct an automorphic form with any principal part.   The price we pay 
is that the forms may not be holomorphic.  

The differential operator
\begin{equation}
\xi_w:=2i y^w\cdot\overline{\frac{\partial}{\partial \overline{z}}},
\end{equation}
plays a central role in the study of such forms (see \cite{BF} for
example). Proposition 3.2 of \cite{BF} gives
\begin{equation}\label{eqn:ximap}
\xi_{2-k}: H_{2-k}(N)\longrightarrow S_k(N).
\end{equation}


This leads to the following result.

\begin{theorem}\label{thm:main} Let $k \ge2$ be in $\frac{1}{2}\Z$ and $N\in \N$. 
\begin{enumerate}
\item There exists a one-to-one correspondence between polynomials with vanishing constant term  
$F\(q^{-1}\) \in \C[q^{-1}]$ and harmonic weak Maass forms 
$\cM \in \H_{2-k}(N)$.

\item Given $F\(q^{-1}\) = \sum_{m=1}^d \frac{\alpha_m}{ m^{k-1}} \in \C[q^{-1}]$ and 
$\cM_F \in H_{2-k}(N)$ with principal part $F(q^{-1})$.  $\cM_F$ 
is a weakly holomorphic form if and only if $$\sum_{m=1}^d \ol{\alpha_m} P(m,k,N;z) = 0.$$
\item  With $F$ and $\cM_F$ as in the previous part,   
$\cM_F$ is weakly holomorphic if and only if $\xi_{2-k}\(\cM_F\) = 0$. 
\end{enumerate}
  
\end{theorem}

The development of the theory of harmonic weak Maass forms has been motivated by Borcherds products 
and Ramanujan's mock theta functions.  
In Section \ref{sec:ramanujan} we see that to establish the modularity of Ramanujan's mock theta function $f(q)$
one must add it a certain non-holomorphic function.
This results in a harmonic weak Maass form 
$\cM_f$.  Just as the non-holomorphicity of a form with arbitrary principal part is controlled by a nonzero 
cusp form,  the non-holomorphicity of the form $\cM_f$ completing Ramanujan's mock theta function $f$
is controlled by a nontrivial cusp form.

In the next section we develop a bit of the theory of harmonic weak Maass forms and 
give a proof of Theorem \ref{thm:main}.
 
\section{A Proof of Theorem \ref{thm:main}}\label{sec:proofs}

In this section we will give a proof of Theorem \ref{thm:main}. We begin by showing that every 
harmonic weak Maass form naturally decomposes into two parts.   
The interplay between these two parts, the 
non-holomorphic part and holomorphic parts, has played a significant role in 
the arithmetic of the Fourier coefficients of harmonic weak Maass forms, 
see for example the work of Bruinier, Ono and the author \cite{BOR}. 
We exploit this relationship to prove Theorem \ref{thm:main}.

It follows from \emph{(ii)} and \emph{(iii)} of Definition \ref{def:harmonic1} that every weight $2-w$
harmonic weak Maass form $\cM(z)$ has a Fourier expansion of the form
\begin{equation}\label{eqn:MaassFourier}
\cM(z)=\sum_{n\gg -\infty} c_\cM^+(n) q^n + \sum_{n<0} c_\cM^-(n)
\Gamma(w-1,4\pi |n|y) q^n,
\end{equation}
where $\Gamma(a,x)$ is the incomplete Gamma-function (see Section 3 of \cite{BF}). 
This suggests that $\cM(z)$ naturally decomposes
into two summands
\begin{eqnarray}
\label{eqn:fourierhh} &\cM^{+}(z):=\sum_{n\gg -\infty} c_\cM^+(n) q^n,\\
\label{eqn:fouriernh} &\cM^{-}(z):=\sum_{n< 0}
c_\cM^-(n)\Gamma(w-1,4\pi|n|y)q^n.
\end{eqnarray}
The function $\cM^+$ is referred to as the holomorphic part of $\cM$ and the function $\cM^-$ is referred to as the 
non-holomorphic part of $\cM$. 

It is not difficult to make \eqref{eqn:ximap} more
precise using Fourier expansions.
In the notation of \eqref{eqn:MaassFourier}
a straightforward calculation (see Section 4 of \cite{BOPNAS}, for example) shows that
$\xi_{2-k}(\cM)$ has the Fourier expansion
\begin{equation}\label{eqn:xiFourier}
\xi_{2-k}(\cM)=-(4\pi)^{k-1}\sum_{n\ge 1
}\overline{c_\cM^{-}(-n)}n^{k-1}q^n.
\end{equation}

Following Bruinier and Funke \cite{BF}, we define a pairing between the spaces $S_k(N)$
and
$H_{2-k}(N)$. For $g\in M_{k}(N)$ and $\cM \in H_{2-k}(N)$ we define
\begin{equation} \{g, \cM\}_k := (g, \xi_{2-k}(\cM))_{k} =
\int_{\Gamma_0(N) \backslash \H }g(z)\cdot \ol{\xi_{2-k}(f)(z)}
y^{k} \frac{dx \ dy}{y^2}.
\end{equation}
The pairing of $g$ and $\cM$ can be
explicitly evaluated in terms of the Fourier coefficients of $g$ and
the extended principal part of $\cM$. 
\begin{lemma}[Proposition 3.5 of \cite{BF}]\label{lem:BF3.5}
If $g(z) = \sum_{n>0} c_g(n) q^n\in S_k(N)$ and $\cM\in H_{2-k}(N)$ with Fourier expansion as in 
\eqref{eqn:MaassFourier}, then 
$$\{ g, \cM\}_k = \sum_{n>0}  c_g(n) c_\cM^+(-n).$$
\end{lemma}
Theorem 1.1 of \cite{BF}
gives that the pairing between $S_k(N)$ and
$H_{2-k}(N)/M_{2-k}^!(N)$ is non-degenerate. 
We recast this result into a form which is convenient for our use. 
\begin{lemma}\label{lem:nondegenerate}
Let $k \in \frac{1}{2}\Z$ and $k\ge 2$ and $\cM \in H_{2-k}(N)$.
 If
$\cM$ has the property that its
principal part at each cusp is
trivial, then $\xi_{2-k}(\cM)= 0$.
\end{lemma}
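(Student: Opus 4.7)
The plan is to exploit the pairing $\{g,f\} = (g,\xi_{2-k}(f))_k$ together with the explicit formula from Proposition 3.5 of \cite{BF} which expresses this pairing purely in terms of the Fourier coefficients of $g$ and the principal parts of $f$ at the various cusps of $\Gamma_0(N)$.

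First I would set $g := \xi_{2-k}(f)$, which by (\ref{eqn:ximap}) lies in $S_k(N)$. With this choice the pairing becomes
\begin{equation*}
\{g,f\} = (g,\xi_{2-k}(f))_k = (\xi_{2-k}(f),\xi_{2-k}(f))_k = \norm{\xi_{2-k}(f)}^2,
\end{equation*}
where the right-hand side is the usual Petersson norm squared of a weight $k$ cusp form (the integral converges because $g \in S_k(N)$ decays exponentially at every cusp).

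Next I would invoke Proposition 3.5 of \cite{BF}, which evaluates $\{g,f\}$ as a finite sum, indexed by the cusps of $\Gamma_0(N)$, of pairings of the principal part of $f$ at each cusp against the corresponding Fourier expansion of $g$ at that cusp. By hypothesis the principal part of $f$ vanishes at every cusp, so every term in this sum is $0$, hence $\{g,f\} = 0$. Combining this with the previous display gives $\norm{\xi_{2-k}(f)}^2 = 0$, and since the Petersson inner product is positive definite on $S_k(N)$ we conclude $\xi_{2-k}(f) \equiv 0$.

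The only real subtlety is a bookkeeping point rather than an obstacle: one must make sure that the notion of ``principal part'' used in the hypothesis of the lemma matches exactly the data that enters the formula of Proposition 3.5 of \cite{BF}, and in particular that the constant-term convention discussed in the remark after the definition of principal part is consistent. Once this matching is verified, the proof is just the two-line computation above.
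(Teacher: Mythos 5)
Your argument is correct and is exactly the route the paper intends: the lemma is stated as a ``straightforward consequence of Proposition 3.5 of \cite{BF}'', and your choice $g=\xi_{2-k}(f)$, giving $\norm{\xi_{2-k}(f)}^2=\{g,f\}=0$ from the vanishing principal parts, is precisely that consequence spelled out. Your bookkeeping remark about the constant-term convention is harmless here since $g$ is a cusp form and so has no constant term to pair against.
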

\begin{proof} Suppose that $g:=\xi_{2-k}(\cM) \ne 0$.
From  the definition of $\{\cdot, \cdot\}_k$ in terms of the Petersson inner product $\{\xi_{2-k}(\cM), g\}_k = \(g, g\)_k \ne
0$.  Applying Lemma \ref{lem:BF3.5} we see that  the principal part at infinity is non-trivial.
\end{proof}

\subsection{Maass-Poincar\'e Series}\label{sec:poincare}  In this section we will
describe two families of Poincar\'e series.  The first family is a family of Maass- Poincar\'e 
series.  These series allow us to construct an automorphic form with arbitrary principal part. 
The second family of series is the classical family of cuspidal Poincar\'e series. 

As usual, for
$A=\smatr{a}{b}{c}{d} \in \SL_2(\Z)$ and $f:\H\to \C$, we
let
\begin{equation}\label{slash}
(f\mid_k A )(z):= j(A,z)^{-2k} f(A z).
\end{equation}
 Let $m$ be an integer,
and let $\varphi_m:\R^{+}\to \C$ be a function which satisfies
$\varphi_m(y)=O(y^\alpha)$, as $y\to 0$, for some $\alpha\in \R$.
Define
\begin{equation}
\varphi^{*}_m(z):=\varphi_m(y)e(mx).
\end{equation}
Such functions are fixed by the translations $\Gamma_\infty:=\{ \pm
\matr{1}{n}{0}{1}\ :\ n\in \Z\}$.
Given  this data, for integers $N\geq 1$, we define the generic
Poincar\'e series
\begin{align}
\PP(m,k,\varphi_m,N;z):= \sum_{A\in\Gamma_\infty \backslash
\Gamma_0(N)}(\varphi^{*}_m \mid_k A)(z).
\end{align}

In this notation the classical family is given by
$$P(m,k,N;z)=\PP(m,k,e(imy),N;z).$$
We define a second family of Poincar\'e series, the Maass-Poincar\'e
series (see Section 1.3 of \cite{bruinier} or \cite{fay}). Let $M_{\nu,\,\mu}(z)$ be the
usual $M$-Whittaker function. For complex $s$, let
\begin{equation}\cM_s(y):= |y|^{-\frac{k}{2}}
M_{\frac{k}{2}\sgn(y),\,s-\frac{1}{2}}(|y|),
\end{equation}
and for $m\geq 1$ let $\varphi_{-m}(z):=\cM_{1-\frac{k}{2}}(-4\pi m
y)$. We let
\begin{equation}
Q(-m,k,N;z):=\frac{1}{(k-1)!}\cdot \PP(-m,2-k,\varphi_{-m},N;z).
\end{equation}

The Fourier expansions of these series are given in terms of the
$I$-Bessel and $J$-Bessel functions, and the Kloosterman sums
\begin{equation}
K_k(m,n,c):= \begin{cases} \sum_{v(c)^{\times}}
e\left(\frac{m\overline v+nv}{c}\right) & k \in \Z
\\\sum_{v(c)^{\times}} \(\frac{c}{v}\)^{2k}\varepsilon_v^{2k}
e\left(\frac{m\overline v+nv}{c}\right) & k \in
\frac{1}{2}\Z\setminus \Z \end{cases}.
\end{equation}
Here $v$ runs through the primitive residue classes modulo $c$, and
$v\overline v\equiv 1\pmod{c}.$ We have the following proposition
(for example, see Section 1.3 of  \cite{bruinier}).

\begin{proposition}\label{prop:QqExp}
If $k \ge 2$ with $k\in \frac{1}{2}\Z$ and $m, N\geq 1$, then 
$$Q(-m,k,N;z)= Q^{+}(-m,k,N;z) + Q^{-}(-m,k,N;z) \in H_{2-k}(N),$$
where
\begin{displaymath}
Q^{-}(-m,k,N;z) =-\frac{\Gamma(k-1,4\pi m y)}{(k-2)!}q^{-m}+
\sum_{n<0}b(-m,k,N;n) \cdot\Gamma(k-1,4\pi |n|y) q^n,
\end{displaymath}
with
\begin{displaymath}
b(-m,k,N;n) =  - \frac{2\pi i^{k}}{(k-2)!} \cdot
\left | \frac{m}{n}\right|^{\frac{k-1}{2}} \sum_{\substack{c>0\\
c\equiv 0\pmod{N}}} \frac{K_{2-k}(-m,n,c)}{c}\cdot J_{k-1}\left(
\frac{4\pi \sqrt{ |mn|}}{c}\right),
\end{displaymath}
for negative integers $n$ and where 
\begin{displaymath}
Q^{+}(-m, k, N;z)=q^{-m}+\sum_{n=0}^{\infty}b(-m,k,N;n)q^n,
\end{displaymath}
with
\begin{displaymath}
b(-m,k,N;0)=-\frac{(2 \pi
i)^{k}m^{k-1}}{(k-1)!}\cdot \sum_{\substack{c>0\\
c\equiv 0\pmod{N}}}\frac{K_{2-k}(-m,0,c)}{c^k},
\end{displaymath}
and
\begin{displaymath}
b(-m,k,N;n)=-2\pi i^{k} \cdot \sum_{\substack{c>0\\
c\equiv 0\pmod{N}}} \left(\frac{m}{n}\right)^{\frac{k-1}{2}}
\frac{K_{2-k}(-m,n,c)}{c}\cdot I_{k-1}\left(\frac{4\pi
\sqrt{|mn|}}{c}\right)
\end{displaymath}
for positive integers $n$.
Additionally, the principal part at all other cusps is zero.
\end{proposition}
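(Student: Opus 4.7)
The plan is to compute the Fourier expansion of $Q(-m,k,N;z) = \tfrac{1}{(k-1)!}\,\PP(-m,2-k,\varphi_{-m},N;z)$ directly from its definition as a sum over $\Gamma_\infty\backslash\Gamma_0(N)$, splitting the coset representatives according to whether the lower-left entry $c$ of the matrix is zero or positive. The $c=0$ contribution is just the seed $\varphi^{*}_{-m}(z)=\cM_{1-k/2}(-4\pi m y)\,e(-mx)$, and the first step is to rewrite this Whittaker expression in closed form. Using the standard identity for the $M$-Whittaker function at the parameter $M_{-k/2,(1-k)/2}$ (derived from the Kummer transformation together with the integral representation of the incomplete gamma function), one obtains
\begin{equation*}
\tfrac{1}{(k-1)!}\,\varphi^{*}_{-m}(z) \;=\; q^{-m} \;-\; \frac{\Gamma(k-1,4\pi m y)}{(k-2)!}\,q^{-m},
\end{equation*}
which produces both the holomorphic piece $q^{-m}$ in $Q^{+}$ and the distinguished $n=-m$ non-holomorphic term in $Q^{-}$.

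For the $c>0$ part, I would parametrize $\Gamma_\infty\backslash\Gamma_0(N)$ by bottom rows $(c,d)$ with $c>0$, $N\mid c$, $\gcd(c,d)=1$, and use the identity $Az = \tfrac{a}{c} - \tfrac{1}{c^{2}(z+d/c)}$ to expand each summand of $(\varphi^{*}_{-m}\mid_{2-k}A)(z)$. Extracting the $n$-th Fourier coefficient by integrating against $e(-nx)$ on $\R$, the sum over $d\pmod c$ collapses, via $ad\equiv 1\pmod c$, into the Kloosterman sum $K_{2-k}(-m,n,c)$, and what is left is a Bessel transform of a Whittaker function. The classical Hejhal-type evaluation of this transform yields, up to explicit constants,
\begin{equation*}
J_{k-1}\!\left(\tfrac{4\pi\sqrt{|mn|}}{c}\right)\ \text{ for } n<0, \qquad I_{k-1}\!\left(\tfrac{4\pi\sqrt{|mn|}}{c}\right)\ \text{ for } n>0,
\end{equation*}
with the negative-$n$ case additionally generating the factor $\Gamma(k-1,4\pi|n|y)$ characteristic of the non-holomorphic part, and with $n=0$ giving the $c^{-k}$ weighting that appears in the formula for $b(-m,k,N;0)$.

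The main obstacle here is not conceptual but combinatorial: tracking the normalization constants (the $i^{k}$, $(k-2)!$, and $2\pi$ factors, as well as the multiplier $\varepsilon_d$ in the half-integer weight case) through the Whittaker-to-Bessel identity and through the slash-operator factor $j(A,z)^{2k-4}$. Absolute convergence of the resulting series in $c$ for $k>2$ follows from $J_{k-1}(x)\ll x^{k-1}$ near the origin combined with the Weil bound on Kloosterman sums; the case $k=2$ requires a standard Hecke-style regularization but yields the same formula. Finally, the claim that the principal part at cusps other than $\infty$ is constant is verified by conjugating by a scaling matrix sending each such cusp to $\infty$ and noting that the constraint $N\mid c$ in $\Gamma_0(N)$ precludes any coset from producing exponential growth there, so only the asymptotically constant Fourier mode survives. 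The detailed bookkeeping of all these steps, particularly in the half-integer weight setting, is carried out in \cite{Hejhal} and \cite{BOPNAS}, and I would follow those presentations closely.
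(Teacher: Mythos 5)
Your outline is correct and is essentially the argument the paper relies on: the paper gives no proof of Proposition \ref{prop:QqExp} but cites \cite{BOPNAS, Hejhal}, and those references carry out exactly the computation you describe (the $c=0$ seed term rewritten via the Whittaker/incomplete-gamma identity to produce $q^{-m}-\tfrac{\Gamma(k-1,4\pi my)}{(k-2)!}q^{-m}$, and the $c>0$ cosets unfolded into Kloosterman sums against Bessel transforms). Your identification of the normalization bookkeeping and the $k=2$ regularization as the only delicate points is accurate.
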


More common is the Fourier expansion for the classical Poincar\'e
series.  It is given in the following proposition (see
Section 3.2 of \cite{iwaniec}, for instance).
\begin{proposition}\label{prop:poincClassical}
If $k\in \frac{1}{2} \Z$, $k\ge 2$, and $m, N\geq 1$  with $4\mid N$
when $k\in \frac{1}{2}\Z \setminus \Z$, then $P(m,k,N;z) =
\sum_{n\ge 1} a(m,k,N;n) q^n$, where
\begin{displaymath}
a(m,k,N;n)=2\pi i^{k}\left(\frac{n}{m}\right)^{\frac{k-1}{2}} \cdot
\sum_{\substack{c>0\\c\equiv 0\pmod{N}}} \frac{K_k(m,n, c)}{c}\cdot
J_{k-1} \left(\frac{4\pi \sqrt{mn}}{c}\right).
\end{displaymath}
\end{proposition}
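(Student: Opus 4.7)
The approach is the classical unfolding computation, adapted to the half-integer weight setting to track the automorphy multiplier. I extract the $n$-th Fourier coefficient of $P(m,k,N;z)$ by decomposing the Poincar\'e series according to the bottom row of coset representatives, then applying a Poisson-type unfolding.

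First, parametrize $\Gamma_\infty\backslash\Gamma_0(N)$ via the bottom row: the identity coset (with $c=0$) contributes the single term $e(mz)$, while the remaining cosets are in bijection with pairs $(c,d)$ satisfying $c>0$, $c\equiv 0\pmod N$, and $\gcd(c,d)=1$, since left multiplication by $\Gamma_\infty$ preserves $(c,d)$. For $c\neq 0$ use the identity
$$\gamma z \;=\; \frac{a}{c} - \frac{1}{c(cz+d)},$$
which gives $e(m\gamma z) = e(m\bar d/c)\,e(-m/[c(cz+d)])$, where $\bar d \pmod c$ is determined by $a\bar d\equiv 1\pmod c$ and so depends only on the residue class $\ell\equiv d\pmod c$.

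Second, write $d=\ell+rc$ with $\ell$ running over reduced residues mod $c$ and $r\in\Z$, and compute
$$a(m,k,N;n) \;=\; e^{2\pi n y}\int_0^1 P(m,k,N;x+iy)\,e(-nx)\,dx$$
by breaking off the identity contribution. Combining the $r\in\Z$ sum with the unit-interval integral unfolds it to an integral over all of $\R$, and the $\ell$-sum, once the factor $e(n\ell/c)$ produced by the substitution $u=c(x+iy)+\ell$ is pulled out of the integrand, combines with $e(m\bar\ell/c)$ and the half-integer multiplier $\bigl(\tfrac{c}{d}\bigr)^{2k}\varepsilon_d^{2k}$ (which depends only on $\ell\pmod c$) to yield exactly the Kloosterman sum $K_k(m,n;c)$ of the excerpt.

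Third, what remains is a contour integral of the shape
$$\int_{\mathrm{Im}(u)=cy} u^{-k}\, e\!\left(-\tfrac{m}{cu} - \tfrac{nu}{c}\right) du,$$
which is a classical Hankel-type representation of the Bessel function: shifting to a Hankel contour and applying the standard formula produces $2\pi i^k (n/m)^{(k-1)/2} J_{k-1}(4\pi\sqrt{mn}/c)$ after restoring the $1/c$ Jacobian from $dx=du/c$. Reassembling the pieces across $c$ yields the stated identity. The principal technical obstacle is bookkeeping for absolute convergence, so that summation and integration may be freely exchanged: the trivial bound $|K_k(m,n;c)|\le c$ together with $J_{k-1}(x)\ll_k \min(x^{k-1},x^{-1/2})$ suffices for $k\ge 2$, and in the half-integer case one must carefully unwind the definitions of $j(\gamma,z)$ and $\varepsilon_d$ to confirm that the multiplier collapses precisely into the twisted Kloosterman sum $K_k(m,n;c)$ rather than some conjugate variant.
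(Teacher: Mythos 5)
The paper gives no proof of this proposition---it defers to Iwaniec's book---and your unfolding computation is exactly the classical argument given there (coset decomposition by the bottom row, unfolding the $r$-sum against the unit-interval integral, collapse of the $\ell$-sum and the multiplier into $K_k(m,n,c)$, and the Hankel-type contour integral producing $J_{k-1}$). So in approach you are reproducing the intended proof.

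There is, however, one concrete discrepancy you must resolve: your computation does not yield the formula as displayed. You correctly ``break off the identity contribution,'' but the $c=0$ coset contributes $e(mz)$, whose $n$-th Fourier coefficient is $\delta_{mn}$; the correct conclusion of your argument is therefore $a(m,k,N;n)=\delta_{mn}+2\pi i^{k}(n/m)^{(k-1)/2}\sum_{c}c^{-1}K_k(m,n,c)J_{k-1}(4\pi\sqrt{mn}/c)$. The Kronecker delta is missing from the statement (this appears to be a typo in the paper: consistency with (\ref{eqn:xiQpoincare}) forces the delta term, since $Q^{-}(-m,k,N;z)$ carries the extra leading term $-\Gamma(k-1,4\pi my)q^{-m}/(k-2)!$ beyond its Kloosterman-sum coefficients). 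As written, your final sentence claims the stated identity, which your own method contradicts at $n=m$; you should either supply the delta or explain where it went. Two smaller points: the inverse should be defined by $d\bar d\equiv 1\pmod{c}$, so that $a\equiv\bar d\pmod{c}$ follows from $ad-bc=1$---your condition $a\bar d\equiv 1\pmod{c}$ would give $\bar d\equiv d$ and hence the wrong Kloosterman sum. And your convergence estimate, which amounts to $\sum_c c^{1-k}$, fails at the boundary case $k=2$, which the proposition includes; there the Poincar\'e series is not even absolutely convergent, and one needs Weil's bound for the Kloosterman sums together with a regularization (Hecke's trick or analytic continuation in $s$) to justify the rearrangements.
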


Using Propositions \ref{prop:QqExp} and \ref{prop:poincClassical}
with \eqref{eqn:xiFourier} and the identity
$$\ol{K_{2-k}(-m,-n,c)} = K_k(m,n, c)$$ we see that
\begin{equation}\label{eqn:xiQpoincare}
\xi_{2-k}(Q(-m,k,N;z)) = \frac{(4\pi m)^{k-1}}{(k-2)!}
P(m,k,N;z).\end{equation}
See \cite{BKR} for a different proof of this result as well as additional properties of these Poincar\'e 
series and their Fourier coefficients. 

\begin{proof}[Proof of Theorem \ref{thm:main}]
We begin by showing that there is a one-to-one correspondence between principal parts and harmonic 
weak Maass forms.  First, let $F(q^{-1}) = \sum_{m =1}^d \beta_m q^{-m}$. Then by 
Proposition \ref{prop:QqExp} 
the harmonic weak 
Maass form 
$$\cM(z) := \sum_{m=1}^d \beta_m Q(-m, k, N;z)\in H_{2-k}(N)$$
has principal part at $\infty$ equal to $F(q^{-1})$.   
Additionally, if there is a second such harmonic weak Maass form, say $\cM'$, 
then the harmonic weak Maass form $\cM - \cM'$ will have vanishing principal part at all cusps (ie. the extended
principal parts are all constants). Applying Lemma \ref{lem:nondegenerate} we see that $\cM - \cM' = 0$.

To prove the second part of the theorem, 
assume that $\sum_{m\in \cI} \ol{\alpha_m} P(m,k,N;z) = 0$. Write
\begin{align*}f(z) = \sum_{m\in \cI} \frac{\alpha_m}{m^{k-1}} Q(-m,k,N;z)
= \sum_{n \gg -\infty} c_f^+(n)q^n + \sum_{n<0} c_f^-(n) \Gamma(k-1,
4\pi \abs{n} y) q^n.\end{align*} Applying \eqref{eqn:xiQpoincare} and \eqref{eqn:xiFourier}
we have $$\xi_{2-k}(f)(z) =
\frac{(4\pi)^{k-1}}{(k-2)!} \sum_{m\in \cI} \ol{\alpha_m} P(m,k,N;z) =
-(4\pi)^{k-1} \sum_{n\ge 1} \ol{c_f^-(-n)} n^{k-1}q^n.$$ By our
assumption on the sum over the Poincar\'e series we know that
$c_f^-(n) =0$ for all $n<0$.   But then $f\in M_{2-k}^\infty(N)$. So we
see that $$\sum_{m\in I} \frac{\alpha_m}{m^{k-1}} Q^+(-m,k,N;z)\in
M_{2-k}^\infty(N)$$ is the weakly holomorphic form that we desire.

Conversely, assume that such a weakly holomorphic modular form
exists. Call it $f$. 
From the expansion for the coefficients of $Q^+(-m,k,N;z)$ in
Proposition \ref{prop:QqExp} we may conclude that $\wt{f}(z) := -f(z) +
\sum_{m\in \cI} \frac{\alpha_m}{m^{k-1}} Q(-m,k,N;z) \in H_{2-k}(N)$
has trivial principal part at each cusp. By Lemma
\ref{lem:nondegenerate}, $0=\xi_{2-k}(\wt{f})= \sum_{m\in \cI}
\alpha_m P(m, k, N;\cdot).$

Finally, the last part of the theorem follows from \eqref{eqn:MaassFourier} and \eqref{eqn:xiFourier}.
\end{proof}

\section{Ramanujan's Mock Theta Functions and Harmonic Weak Maass Forms}\label{sec:ramanujan}
Ramanujan's mock theta functions proved themselves mysterious for more than 80 years.  
They are $q$-hypergeomtric series such as
\begin{equation*}
f(z):=1+\sum_{n=1}^{\infty}\frac{q^{n^2}}{(1+q)^2(1+q^2)^2\cdots (1+q^n)^2}.
\end{equation*}
Ramanujan's mock theta functions have ``nearly modular'' properties with respect to $z$, but fail to be fully modular.
Zwegers in his Ph.D. thesis \cite{Zw} explained that modularity  
is obtained if one adds  to the mock theta function the non-holomorphic function
$$f^-(z):= \pi^{-\frac{1}{2}} \sum_{n\equiv 1\pmod{6}} \sgn(n) \Gamma\(\frac{1}{2}, \frac{\pi n^2 y}{6}\)  q^{-\frac{n^2}{24} }.$$
The resulting function, $\cM_f(z) := q^{-1} f(24z)+f^{-} (24z)$, is a harmonic weak Maass form of weight $1/2$
on $\Gamma_0(144)$ with Nebentypus $\(\frac{12}{\cdot}\)$.

The principal part of the harmonic weak Maass form $\cM(z)$ is $q^{-1}$. 
Furthermore, the 
non-holomorphicity of $\cM(z)$ is dictated by the existence of a certain weight $3/2$ cusp form. 
Indeed, $f^-$ shares a close resemblance with the weight $3/2$ unary theta function
$$g(z):= \sum_{n \equiv 1 \pmod{6}} n q^{n^2/24}.
$$ In fact we have 
$$\xi_{\frac{1}{2} }\(\cM(z) \)= \xi_{\frac{1}{2} }\(f^-(24z) \)=  2g(24z).$$

The work of Bringmann and Ono \cite{bo1} for the construction of a Poincar\'e series that equal $\cM(z)$ and $g(z)$.

\section*{Acknowledgements}
to add?
%

\end{document}